\documentclass[11pt,leqno]{article}
\usepackage{amssymb}
\usepackage[reqno]{amsmath}
\usepackage{amsthm}
\usepackage{amsfonts}
\usepackage{comment}
\usepackage{graphicx}
\usepackage{xcolor}
\usepackage{mathtools}
\mathtoolsset{showonlyrefs}
\usepackage{csquotes}
\usepackage{hyperref}
\usepackage{enumitem}
\usepackage{setspace} 
\doublespacing

\usepackage{color}
\definecolor{gr}{rgb}{0,0.50, 0.50}
\definecolor{mg}{rgb}{0.85,0,0.85}

\usepackage{pgfplots}
	\usetikzlibrary{pgfplots.fillbetween,arrows}
	\tikzset{dimens1/.style={->,>=stealth}}
	\definecolor{blue1}{HTML}{f4eb0a}
	\definecolor{blue1a}{HTML}{2bc4b0}
	
\newcommand\blfootnote[1]{%
  \begingroup
  \renewcommand\thefootnote{}\footnote{#1}%
  \addtocounter{footnote}{-1}%
  \endgroup
}

 \oddsidemargin -.1in \textwidth 6in
\topmargin -0.5 in 
\textheight 22cm

\setlength{\columnsep}{1.5cm }

\def\R{\mathbb{R}}

\def\ds{\displaystyle}
\def\l{\ell} 
\def\e{\varepsilon}

\def\gae{\Gamma_\varepsilon} 

\def\ep{\varepsilon}

\def\dive{\textnormal{div}}

\def\rw{\rightarrow}
\def\ru{\rightharpoonup}
\def\G{\Gamma}
\def\n{\nabla}
\def\g{\gamma}

\def\beq{\begin{equation}}
\def\eeq{\end{equation}}
\def\ba{\begin{array}}
\def\ea{\end{array}}

\def\f{\varphi}

\newtheorem{theorem}{Theorem}[section]

\newtheorem{proposition}[theorem]{Proposition}
\newtheorem{definition}[theorem]{Definition}

\theoremstyle{definition}
\newtheorem{remark}[theorem]{Remark}

\numberwithin{equation}{section}

	\title{Effective thermal conduction of a Signorini-type problem\\ in composites with rough interface}
	
	\author{Sara Monsurr\`{o}$^{*\dagger}$, Carmen Perugia$^{\ddagger,\dagger}$ and Federica Raimondi$^{*\dagger}$}
	
	\date{}

\setlength{\parindent}{1em}
\begin{document}

	\maketitle
	\vskip -8mm
	{\scriptsize{* Universit\`a degli Studi di Salerno, Dipartimento di Matematica, Via Giovanni Paolo II 132, 84084 Fisciano (SA), Italy.\hskip 2mm Email: smonsurro@unisa.it,$\;$ fraimondi@unisa.it\\
\indent $\ddagger$ Universit\`a degli Studi del Sannio, Dipartimento di Scienze e Tecnologie, Via dei Mulini 74, 82100 Benevento, Italy.\hskip 2mm Email: cperugia@unisannio.it
\\ $\dagger$ Member of the Gruppo Nazionale per l'Analisi Matematica, la Probabilit\`a e le loro
Applicazioni (GNAMPA) of the Istituto Nazionale di Alta Matematica (INdAM).}}

\begin{abstract} 
	
	We analyse the effect of a Signorini-type interface condition on the asymptotic behaviour, as $\e$ tends to zero, of a problem posed in an open bounded cylinder of $\R^N$, $N\geq 2$, divided in two connected components by an imperfect rough surface. The Signorini-type condition is expressed by means of two complementary equalities involving the jump of the solution on the interface and its conormal derivative via a parameter $\gamma$. Different limit problems are obtained  according to the values of $\g$ and the amplitude of the interface oscillations.

\blfootnote{\textbf{Keywords: }{Homogenization, nonlinear elliptic problems, rough interface, variational inequality, Signorini-type condition.}\vskip 0.3mm}
\blfootnote{\textbf{MSC:}  35B27, 35J60, 35J66, 35J20, 35R35.}
	\end{abstract}

\section{Introduction}\label{intro}
In this paper we are interested in a free boundary-value problem posed in a two-component domain. 
A free boundary-value problem consists in a partial differential equation combined with boundary conditions often expressed via a set of equalities and inequalities. As a consequence,  the unknowns are both the solution and regions of the boundary of the domain. Unlike classical cases, the weak formulation of such problems may involve variational inequalities, whose mathematical and historical backgrounds can be found in \cite{Bre,DL, KS, LS}.

In particular, we are interested in Signorini-type conditions given on a rough surface separating the two parts of the composite.
These kinds of problems are widely used to model phenomena arising from engineering and material sciences, such as lubrication of rough surfaces \cite{PT}, frictional forces at the contact area of two different materials \cite{BC,G}, turbulence flows over rough walls \cite{APV,J} and heat and mass transfer on surfaces with roughness elements between solid and fluid interfaces \cite{gomaa}.	

The homogenization of problems presenting Signorini boundary conditions in domains with oscillating boundaries, in both linear and nonlinear cases, has been treated in \cite{GM1,GM2,MNW}. In \cite{CET,CMT,pastu} the geometrical setting is given by perforated domains, whilst in \cite{MPR2} domains with inclusions have been considered. Cracks and layers were taken into account in the works \cite{CDO,GMO}. 

Here we consider an open and bounded cylinder \(Q=\omega \times ]-\l,\l[\) in \(\R^N,\) \(N\geq 2\), where \(\omega\) is a bounded smooth domain in \(\R^{N-1}\) and  \(\ell>0\). Denoted by $\e$ a small positive parameter tending to zero, the domain $Q$ is made up of two connected components $Q_\e^+$ and $Q_\e^-$ separated by a rough interface $\gae$, which is the graph of a Lipschitz-continuous and quickly oscillating function, $\e$-periodic in the first $N-1$ variables, see Figure \ref{dom1}. The amplitude of the oscillations is of order $\e^k$, with $k> 0$. Therefore, as \(\e\to 0\), the interface approaches a flat surface $\Gamma_0$ (cf. Figure \ref{dom2}) even though, for some values of the parameter $k$, the $(N-1)$-dimensional measure of $\gae$ goes to infinity, see \cite{DMR3} for more details.
	
Our aim is to study the asymptotic behaviour of the following Signorini-type problem:	
\begin{equation}\label{pintro}
\left\{\begin{array}{lllll}
\displaystyle
-\operatorname{div}(A^\e\n u_\e)=f & \hbox{ in } Q\setminus \gae, \\[2mm]
\displaystyle
(A^\e\n u_\e)^-\cdot\nu_\e=(A^\e\n u_\e)^+\cdot \nu_\e & \hbox{ on } \gae,\\[2mm]
\displaystyle
[u_\e]\geq 0, \quad (A^\e\n u_\e)^+\cdot\nu_\e+\e^\g h^\e[u_\e] \geq 0 & \hbox{ on } \gae,\\[2mm]
\displaystyle
[u_\e]((A^\e\n u_\e)^+\cdot\nu_\e+\e^\g h^\e[u_\e])=0 & \hbox{ on } \gae,\\[2mm]
\displaystyle
u_\e =0 & \hbox{ on } \partial Q,
\end{array}\right.
\end{equation}
where \(A^\e\) and \(h^\e\) are \(\e\)-periodic rapidly oscillatory, $f$ is in $L^2(Q)$, \(\nu_\e\) is the unit normal to \(Q^+_\e\), \(\gamma\in \R\) and $[\cdot]$ is the jump of the solution on the interface (see Section \ref{secsetting} for precise notation). 

From a physical point of view, problem \eqref{pintro} can model the heat exchange in a medium made up of two thermal conductors separated by a rough surface. The presence of  impurities distributed on the interface may give rise to zones of $\gae$ of perfect conduction (absence of jump) and to regions characterized by an imperfect contact transmission condition. This phenomenon is well described by our Signorini-type conditions, which mean that one can distinguish two a priori unknown subsets of $\gae$ where the two alternative equalities 
\beq\label{alt}
\ds [u_\e]=0\qquad\text{or}\qquad (A^\e\n u_\e)^+\cdot\nu_\e+\e^\g h^\e[u_\e]=0
\eeq
are satisfied.\\
The above imperfect transmission condition is physically justified in \cite{CJ} (see also \cite{RT,FMPhomo,FMP,MNP, MNP2,MP1,Rai}).\\
Problems posed in the same domain $Q$ and presenting only a jump of the solution proportional to the flux on the interface have been considered in \cite{AMR,dongia,roughfast,donpiat,MPR1}. 

We prove that the effective thermal properties of the composite vary according to the amplitude of the oscillations $k$ and the parameter $\gamma$, and they are represented by three different cases, detailed in Theorem \ref{theo:homores} of Section \ref{sec4}. 

In cases A) and B), the two alternative conditions in \eqref{alt} are both significant at the limit. Indeed, we obtain two elliptic Signorini-type homogenized problems defined in the cylinder $Q$ split  in two components $Q^+$ and $Q^-$ by a flat interface $\Gamma_0$. In particular, in case A), the interface conditions exhibited on  $\G_0$  are analogous to the ones of the $\e$-problem and keep track of both $h^\e$ and the geometrical structure of $\gae$.\\
In case B), the limit problem suggests that $\G_0$ behaves as a semi-conductive interface of negligible thickness, see \cite{DL}.\\
Finally, in case C), we find a homogenized  Dirichlet elliptic problem in the whole cylinder $Q$. In fact, the part of $\gae$ where the two temperature fields differ is such small that it does not affect the asymptotic behaviour. Roughly speaking, the limit problem behaves as in presence, at $\e$-level, of the only transmission condition on the interface of type $[u_\e]=0$.
 
We remark that one can also assume $h^\e\equiv 0$, obtaining just one limit problem for all values of $k$, see Remark \ref{hnullo}.

Let us mention that the results of the paper  can be generalized adding in the first equation of problem \eqref{pintro} a lower order term of the type $h_1(u_\e)$, being $h_1$ an increasing and Lipschitz-continuous function, vanishing in zero, see Remark \ref{nonlinear}.

The key point of this work is the choice of appropriate test functions allowing to identify the different limit problems. Moreover, in cases A) and B) we need to handle products of two weakly convergent sequences. To this aim, we take advantage of lower semi-continuity arguments, suitable in the framework of variational inequalities.

The paper is organized as follows. In Section 2, we describe the geometrical setting and introduce the functional framework, along with some preliminary results. In Section 3, once established the assumptions on the data, we prove the unique solvability of the problem at $\e$-level, together with some uniform a-priori estimates. Section 4 is devoted to determine the effective thermal conduction of the problem.

\section{Geometrical setting and functional framework}\label{secsetting}

		In this section, we describe the structure of our domain $Q\subset \R^N$, $N\geq 2$, define the functional spaces involved and recall some preliminary results, using the notation given in \cite{donpiat}. 
		
The set $Q$ is divided in two subdomains by a periodic rough interface described by means of the graph of a function $g$ satisfying the following assumption:
		\medskip
		
		\textbf{A$_g$}) \quad The function $g:\R^{N-1}\rw \mathbb{R}$  is positive, $Y'$-periodic,  Lipschitz-continuous, being $Y'=]0,1[^{N-1}$ the surface reference cell.
		\vskip 2mm
		
		More precisely, given a positive parameter  $\e$  converging to zero,  the oscillating interface is represented by 
   $$    \gae=\left\{x \in Q\ |\ x_N=\e^k g\left(\frac{x'}{\e}\right)\right\},$$
where $k>0$ and $x'=(x_1, ..., x_{N-1})$.\\
We denote by
    $$    Q_\e^+=\left\{x \in Q\ |\ x_N>\e^k g\left(\frac{x'}{\e}\right)\right\}$$  the upper part of $Q$
    and by 
    $$    Q_\e^-=\left\{x \in Q\ |\ x_N<\e^k g\left(\frac{x'}{\e}\right)\right\}$$
 its lower part (see Figure 1).
 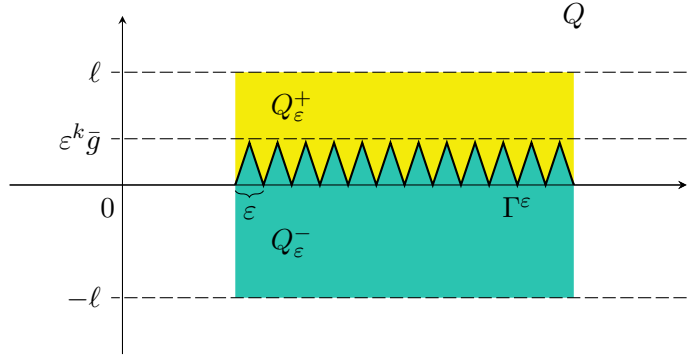
\begin{figure}[h]
    \begin{center}
\begin{tikzpicture}[scale=1.5]
	\draw[dimens1](-1,0) -- (5,0);
	\draw[dimens1](0,-1.5) -- (0,1.5);
	 \draw[name path=A,black, thick,domain=4:16] plot(0.25*(\x, {0.75*(\x-floor(\x)))});
	\draw[name path=B,white] (1,-1) -- (4,-1);
	\draw[name path=C,white] (1,1) -- (4,1);
	\tikzfillbetween[of=A and B]{blue1a};
	\tikzfillbetween[of=A and C]{blue1};
	\draw[black,thick,domain=4:16] plot(0.25*(\x, {0.75*(\x-floor(\x)))});
	\draw[dimens1](-1,0) -- (5,0);
	\draw[dash pattern=on 5pt off 1.7pt] (5,1) -- (-0.1,1) node[left] {\(\ell\)};
	\draw[dash pattern=on 5pt off 1.7pt] (5,-1)-- (-0.1,-1)node[left] {\(-\ell\)};
	\draw[dash pattern=on 5pt off 1.7pt] (5,0.41) -- (-0.1,0.41) node[left] {\(\varepsilon^k\bar{g}\)};
  	\draw[decorate,decoration={brace,mirror}] (1,-0.05) -- (1.25,-0.05) ;
	\node at (-0.13,-0.2) {\(0\)};
		\node at (4,1.5) {\(Q\)};
  	\node at (1.13,-0.25) {\(\varepsilon\)};
   	\node at (1.5,-0.5) {\(Q_\varepsilon^-\)};
 	\node at (1.5,0.7) {\(Q_\varepsilon^+\)};
 	\node at (3.5,-0.2) {\(\Gamma^\varepsilon\)};
\end{tikzpicture}
\caption{\it The domain $Q$ with the oscillating interface $\gae$ }\label{dom1}
\end{center}
\end{figure} 

The case $0<k<1$ displays a fastly oscillating interface. When $k=1$, one has a self-similar geometry, namely the interface $\gae$ is obtained by homothetic dilatation of the fixed function $y_N=g(y')$ in $\R^N$. Finally, for $k>1$ one obtains the flat case.\\
Throughout the paper, we use the following notation:
		\begin{itemize}
		\item $\chi\strut_{E}$ is the characteristic function of any open set $E \subseteq \R^N$,
		\item $\mathcal{M}_{E}(v)$ is the average on $E$ of any function $v \in L^1(E)$,  
				\item $\sim$ is the zero extension to the whole of $Q$ of functions defined in a subset of $Q$.
				\end{itemize}
For any function $v$ defined in $Q$, we set
$$v_\e^+=v_{|Q_\e^+}, \quad v_\e^-=v_{|Q_\e^-}$$
and denote the jump on $\gae$ as follows:
$$ \quad [v]=v_\e^+-v_\e^-.$$
We introduce the space
$$W_0^\e=\{v \in L^2(Q)\ |\ v_\e^+ \in H^1(Q_\e^+), v_\e^- \in H^1(Q_\e^-), v=0 \hbox{ on }\partial Q\},$$ 
and observe that, for any $v \in W^\e_0$, there exists a positive constant C, independent of $\e$, such that
\begin{equation}\label{Poi}
\|v\|_{L^2(Q)}\leq  C \|\n v\|_{L^2(Q\setminus \gae)},
\end{equation}
where
\begin{equation}\label{grad}
\n v=\widetilde{\n v_\e^+}+\widetilde{\n v_\e^-}.
\end{equation}
In view of \eqref{grad}, we identify $\n v$ with the absolute continuous part of the gradient of $v$.\\
Due to the Poincar\'{e} inequality \eqref{Poi}, $W_0^\e$ is an Hilbert space equipped with the norm
\begin{equation}\label{normwe0}
\|v\|_{W_0^\e}=\|\n v\|_{L^2(Q\setminus \gae)}.
\end{equation}

Furthermore, in order to introduce the limit domain, we set
$$Q^+=\{x\in Q\ |\ x_N>0\}, \quad Q^-=\{x\in Q\ |\ x_N<0\}, \quad \G_0=\{x\in Q\ |\ x_N=0\},$$
and
$$v^+=v_{|Q^+}, \quad v^-=v_{|Q^-},$$
and denote the jump on $\G_0$ as follows:
$$ \quad [v]_0=v^+-v^-,$$
for any function $v$ defined in $Q$ (see Figure \ref{dom2}).
\begin{figure}[h]
\begin{center}
\begin{tikzpicture}[scale=1.5]
	\draw[dimens1](-1,0) -- (5,0);
	\draw[dimens1](0,-1.5) -- (0,1.5);
	\draw[name path=A,white] (1,0) -- (4,0);
	\draw[name path=B,white] (1,-1) -- (4,-1);
	\draw[name path=C,white] (1,1) -- (4,1);
	\tikzfillbetween[of=A and B]{blue1a};
	\tikzfillbetween[of=A and C]{blue1};
	\draw[thick] (1,0) -- (4,0);
	\draw[dimens1](-1,0) -- (5,0);
	\draw[dash pattern=on 5pt off 1.7pt] (5,1) -- (-0.1,1) node[left] {\(\ell\)};
	\draw[dash pattern=on 5pt off 1.7pt] (5,-1)-- (-0.1,-1)node[left] {\(-\ell\)};
		\node at (4,1.5) {\(Q\)};
\node at (-0.13,-0.2) {\(0\)};
	\node at (1.5,-0.5) {\(Q^-\)};
 	\node at (1.5,0.5) {\(Q^+\)};
	 	\node at (3.5,-0.2) {\(\Gamma_0\)};
\end{tikzpicture}
\caption{\it The domain $Q$ with the flat interface $\Gamma_0$} \label{dom2}
\end{center}
\end{figure}
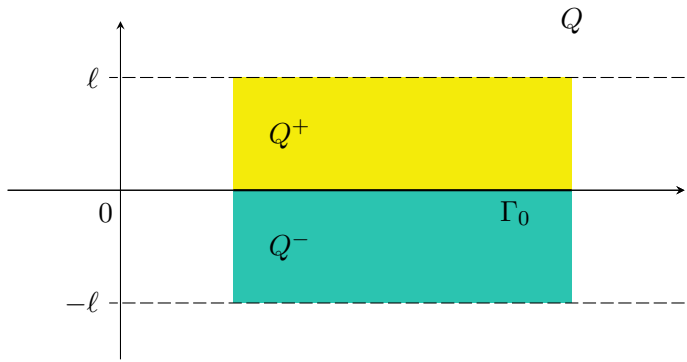

Observe that
\begin{equation}\label{convchi}
\ds \chi_{Q_\ep^{\pm}}\rightarrow \chi_{Q^{\pm}}\quad \text{ strongly in }L^p(Q),\,1\leq p<+\infty \, \text{ and weakly* in }L^{\infty}(Q).
\end{equation}
Then, we define the space
$$W_0^0=\{v \in L^2(Q)\ |\ v^+ \in H^1(Q^+), v^- \in H^1(Q^-), v=0 \hbox{ on }\partial Q\},$$ 
which, due to the Poincar\'{e} inequality, is an Hilbert space endowed with the norm
\begin{equation}\label{normw00}
\|v\|_{W_0^0}=\|\n v\|_{L^2(Q\setminus \G_0)},
\end{equation}
with $\n v$ defined similarly to \eqref{grad}.  

We are now able to recall some preliminary results that are fundamental tools for the homogenization process.
\begin{proposition}[\cite{donpiat}]\label{propdonpiat1}
Under hypothesis (\textbf{A}$_g$), let $\{w_\e \}$ be a family of functions in $W_0^\e$ such that
\begin{equation}\label{stima}
\ds \|w_\e\|_{W_0^\e}\leq C,
\end{equation}
with $C$ positive constant independent of $\e$. Then, there exist a subsequence (still denoted $\e$) and a function $w$
in $W_0^0$ such that
\begin{equation}\label{convw}
		\left\{\begin{array}{lll}
		\ds w_{\e}\rw w &\text{ strongly in } L^2(Q) \text{ and a.e. in }Q,\\[2mm]
		\ds \chi\strut_{Q^+_\e}\n w_{\e} \ru \chi\strut_{Q^+} \n w &\text{ weakly in }(L^2(Q))^N,\\[2mm]
		\ds \chi\strut_{Q^-_\e}\n w_{\e} \ru \chi\strut_{Q^-} \n w &\text{ weakly in }(L^2(Q))^N.
		\end{array}\right.
		\end{equation}
\end{proposition}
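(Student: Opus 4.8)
The plan is to isolate the two genuinely different regions --- the bulk, away from the interface, where we have full $H^1$ control on each component, and the shrinking layer around $\G_0$, where the only usable information is the uniform gradient bound. First I would record that the family is bounded in $L^2(Q)$: by the Poincar\'e inequality \eqref{Poi} and the definition \eqref{normwe0} of the norm, $\norm{w_\e}_{L^2(Q)}\le C\norm{w_\e}_{W_0^\e}\le C$. Hence the zero-extended gradients $\chi_{Q_\e^+}\n w_\e$ and $\chi_{Q_\e^-}\n w_\e$ are bounded in $(L^2(Q))^N$, so along a subsequence $w_\e\ru w_0$ in $L^2(Q)$ and $\chi_{Q_\e^\pm}\n w_\e\ru\eta^\pm$ in $(L^2(Q))^N$ for some $w_0\in L^2(Q)$ and $\eta^\pm\in(L^2(Q))^N$.

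Next I would run a local compactness argument away from $\G_0$. Fixing $\delta>0$ and setting $Q_\delta^+=\omega\times]\delta,\l[$, hypothesis (\textbf{A}$_g$) bounds $g$ by some $\bar g$, so for $\e$ small enough that $\e^k\bar g<\delta$ one has $Q_\delta^+\subset Q_\e^+$ and thus $w_\e\in H^1(Q_\delta^+)$ with $\norm{w_\e}_{H^1(Q_\delta^+)}\le C$ uniformly. By Rellich--Kondrachov and weak compactness, along a subsequence $w_\e\rw w$ in $L^2(Q_\delta^+)$ and $\n w_\e\ru\n w$ in $L^2(Q_\delta^+)$; taking $\delta=1/n$ and diagonalizing gives $w^+\in H^1_{\mathrm{loc}}(Q^+)$ with $w_\e\rw w^+$ in $L^2_{\mathrm{loc}}(Q^+)$ and a.e. in $Q^+$, and similarly $w^-$ on $Q^-$. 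Setting $w=w^+\chi_{Q^+}+w^-\chi_{Q^-}$, comparison with the weak $L^2(Q)$ limit forces $w_0=w$.

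To identify the gradients I would integrate by parts against localized test functions. For $\f\in C_c^\infty(Q^+)$, its support lies in $Q_\e^+$ for $\e$ small, so no interface term appears and $\int_Q\chi_{Q_\e^+}\partial_j w_\e\,\f=-\int_Q w_\e\,\partial_j\f$; passing to the limit yields $\int_{Q^+}\eta_j^+\f=-\int_{Q^+}w\,\partial_j\f$, i.e. $\eta^+=\n w^+$ on $Q^+$, which in particular upgrades $w^+$ to $H^1(Q^+)$. Testing with $\f$ supported in $Q^-$ gives $\eta^+=0$ there, so $\eta^+=\chi_{Q^+}\n w$, and symmetrically $\eta^-=\chi_{Q^-}\n w$; these are exactly the second and third lines of \eqref{convw}. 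The homogeneous condition $w=0$ on $\partial Q$ then passes to the limit by weak continuity of the trace on the fixed cylinders $Q_\delta^\pm$, whose boundaries carry the relevant parts of $\partial Q$, so that $w\in W_0^0$.

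The hard part will be upgrading from $L^2_{\mathrm{loc}}(Q\setminus\G_0)$ to strong convergence on all of $Q$, i.e. ruling out mass concentration in the layer $S_\eta=\omega\times]-\eta,\eta[$ collapsing onto $\G_0$ --- this is the only place where the possibly unbounded slope $\e^{k-1}\n g(x'/\e)$ of $\gae$ for $k<1$ could interfere. I would prove the uniform smallness $\lim_{\eta\to0}\sup_\e\int_{S_\eta}|w_\e|^2=0$. For $\e$ small, $S_\eta\cap Q_\e^+=\{\,\e^k g(x'/\e)<x_N<\eta\,\}$, and along each vertical segment, which stays in $Q_\e^+$, I write $w_\e(x',x_N)=w_\e(x',\eta)-\int_{x_N}^\eta\partial_N w_\e(x',t)\,\dd t$; squaring, applying Cauchy--Schwarz and integrating in $x_N\in]\e^k g,\eta[$ and in $x'\in\omega$ gives
\[
\int_{S_\eta\cap Q_\e^+}|w_\e|^2\,\dd x\le 2\eta\int_\omega|w_\e(x',\eta)|^2\,\dd x'+2\eta^2\int_{Q_\e^+}|\n w_\e|^2\,\dd x,
\]
where the last term is $\le 2\eta^2C^2$ by \eqref{stima} and the horizontal-slice trace in the first is bounded, uniformly in $\e$ and in $\eta\in]0,\l/2[$, by the trace inequality on the fixed cylinder $\omega\times]\eta,\l[$ together with \eqref{stima}. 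The same estimate holds on $S_\eta\cap Q_\e^-$, whence $\sup_\e\int_{S_\eta}|w_\e|^2\to0$ as $\eta\to0$. Combining this with the local strong convergence on $Q\setminus S_\eta=Q_\eta^+\cup Q_\eta^-$ and with $\int_{S_\eta}|w|^2\to0$ yields $w_\e\rw w$ strongly in $L^2(Q)$, the first line of \eqref{convw}, and a.e. convergence follows along a further subsequence. I expect this layer estimate --- reconciling the uncontrolled geometry of $\gae$ with a bound independent of $\e$ --- to be the real heart of the matter, the rest being soft functional analysis.
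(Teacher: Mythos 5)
The paper itself offers no proof of this proposition---it is imported verbatim from \cite{donpiat}---so there is nothing internal to compare against; judged on its own, your argument is correct and is essentially the standard one behind the cited result: Poincar\'e plus Rellich on the fixed subcylinders $\omega\times]\delta,\l[$ and $\omega\times]-\l,-\delta[$ (which lie in $Q^+_\e$, resp.\ $Q^-_\e$, for $\e^k\bar g<\delta$), identification of the weak limits of the zero-extended gradients by testing against $C^\infty_c(Q^{\pm})$, and a vertical-segment Newton--Leibniz estimate showing that no $L^2$ mass concentrates in the shrinking strip $\omega\times]-\eta,\eta[$ (correctly exploiting that $Q^{\pm}_\e$ are super/subgraphs in the $x_N$ direction, so the Lipschitz constant $\e^{k-1}\|\n g\|_\infty$ of $\gae$ never enters). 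The only cosmetic point is that your strip bound is only available for those $\e$ with $\e^k\bar g<\eta$, so it should be stated as a bound on $\limsup_\e\int_{S_\eta}|w_\e|^2$ rather than on $\sup_\e$---which is all the strong-convergence argument requires.
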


\begin{proposition}[\cite{MPR1}]\label{propdonpiat2}
Under the hypotheses of Proposition \ref{propdonpiat1},  there exists a subsequence (still denoted $\e$) such that
\begin{equation}
 \ds w^{\pm}_\e\left(\cdot,\ep^k g\left(\frac{\cdot}{\ep}\right)\right) \, \rw\,
w^{\pm}(\cdot,0) \quad \text{ strongly in } L^2(\omega).
\end{equation}
\end{proposition}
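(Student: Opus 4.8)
The plan is to treat the upper component, the lower one following by the symmetric argument on a strip below $\G_0$. Write $\bar g=\max_{\R^{N-1}}g$, fix two heights $0<a<b<\l$, and take $\e$ small enough that $\e^k\bar g<a$, so that the flat strip $\omega\times\,]a,b[\,\subset Q_\e^+$. For a.e. $x'\in\omega$ the function $\tau\mapsto w_\e(x',\tau)$ is absolutely continuous on $]\e^kg(x'/\e),\l[$, hence the fundamental theorem of calculus gives, for every $s\in\,]a,b[$,
\begin{equation}
w_\e^+\!\left(x',\e^kg\!\left(\tfrac{x'}{\e}\right)\right)=w_\e(x',s)-\int_{\e^kg(x'/\e)}^{s}\partial_{x_N}w_\e(x',\tau)\,\dd\tau .
\end{equation}
Averaging in $s$ over $]a,b[$ decomposes the trace as a main term $\frac{1}{b-a}\int_a^b w_\e(x',s)\,\dd s$ plus a remainder $R_\e(x')$ given by the double integral of $\partial_{x_N}w_\e$.

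Next I would control the two pieces uniformly in $\e$. For the remainder, bounding the inner $\tau$-integral by its value at $s=b$ and using Cauchy--Schwarz in $\tau$ over $]\e^kg(x'/\e),b[$ yields $|R_\e(x')|\le \sqrt{b}\,\big(\int_{\e^kg(x'/\e)}^{b}|\partial_{x_N}w_\e(x',\tau)|^2\,\dd\tau\big)^{1/2}$, whence, integrating in $x'$ and using that $\{(x',\tau):\e^kg(x'/\e)<\tau<b\}\subset Q_\e^+$,
\begin{equation}
\|R_\e\|_{L^2(\omega)}^2\le b\int_{Q_\e^+}|\n w_\e|^2\,\dd x\le b\,\|w_\e\|_{W_0^\e}^2\le C\,b .
\end{equation}
The crucial point is that this bound is independent of $\e$ and uses only the gradient estimate \eqref{stima}, never the $(N-1)$-measure of $\gae$ (which may diverge). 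For the main term, the averaging map $v\mapsto \frac{1}{b-a}\int_a^b v(\cdot,s)\,\dd s$ is a bounded linear operator from $L^2(\omega\times\,]a,b[)$ into $L^2(\omega)$, so the strong convergence $w_\e\rw w$ in $L^2(Q)$ from \eqref{convw} gives, for each fixed $b$, $\frac{1}{b-a}\int_a^b w_\e(\cdot,s)\,\dd s\rw \frac{1}{b-a}\int_a^b w(\cdot,s)\,\dd s$ strongly in $L^2(\omega)$.

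Finally I would run an $\eta/3$ argument. Applying the identity above to the limit $w^+\in H^1(Q^+)$, now with base point $x_N=0$, together with the same Cauchy--Schwarz bound, gives $\big\|\frac{1}{b-a}\int_a^b w(\cdot,s)\,\dd s-w^+(\cdot,0)\big\|_{L^2(\omega)}\le \sqrt{b}\,\|\partial_{x_N}w^+\|_{L^2(\omega\times\,]0,b[)}\rw 0$ as $b\rw0$. Given $\eta>0$, I first choose $b$ (and $a=b/2$) small enough that both $\sqrt{Cb}<\eta/3$ and this last quantity is $<\eta/3$; then, for that fixed $b$, I choose $\e$ small so that the main-term discrepancy is $<\eta/3$. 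The triangle inequality, splitting $w_\e^+(\cdot,\e^kg(\cdot/\e))-w^+(\cdot,0)$ as $R_\e$ plus the main-term discrepancy $\big\|\tfrac{1}{b-a}\int_a^b(w_\e-w)(\cdot,s)\,\dd s\big\|_{L^2(\omega)}$ plus the limit-approximation error, then closes the estimate. I expect the main obstacle to be precisely the uniform-in-$\e$ control of $R_\e$ with the $x'$-dependent lower limit $\e^kg(x'/\e)$: the estimate must avoid any appearance of the oscillating surface and rely solely on the flat $L^2(\omega)$ measure and the gradient bound, which is exactly what makes the statement compatible with a possibly unbounded measure of $\gae$.
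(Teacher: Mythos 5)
Your argument is correct: the slab--averaging identity via the fundamental theorem of calculus along vertical lines, the uniform $L^2(\omega)$ bound $\|R_\e\|_{L^2(\omega)}\le\sqrt{Cb}$ obtained without ever invoking the (possibly unbounded) surface measure of $\gae$, and the final $\eta/3$ splitting are exactly the standard technique for trace convergence on oscillating graphs. The paper itself gives no proof, deferring to \cite{MPR1}, where the argument proceeds along essentially these same lines (comparison of the trace on $\gae$ with values of $w_\e$ on a nearby flat section, controlled by $\sqrt{b}\,\|\nabla w_\e\|_{L^2}$, combined with the strong $L^2(Q)$ convergence of $w_\e$), so your proposal matches the intended proof.
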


\section{The $\e$-problem and its solvability}
Given $f\in L^2(Q)$, we consider the following problem:
\begin{equation}\label{prob}
\left\{\begin{array}{lllll}
\displaystyle
-\operatorname{div}(A^\e\n u_\e)=f & \hbox{ in } Q\setminus \gae, \\[2mm]
\displaystyle
(A^\e\n u_\e)^-\cdot\nu_\e=(A^\e\n u_\e)^+\cdot \nu_\e & \hbox{ on } \gae,\\[2mm]
\displaystyle
[u_\e]\geq 0, \quad (A^\e\n u_\e)^+\cdot\nu_\e+\e^\g h^\e[u_\e] \geq 0 & \hbox{ on } \gae,\\[2mm]
\displaystyle
[u_\e]((A^\e\n u_\e)^+\cdot\nu_\e+\e^\g h^\e[u_\e])=0 & \hbox{ on } \gae,\\[2mm]
\displaystyle
u_\e =0 & \hbox{ on } \partial Q,
\end{array}\right.
\end{equation}
where  $\g\in \R$ and $\nu_\e$ is the unit external normal vector to $Q_\e^+$. 

Our aim is to study the asymptotic behavior, as $\e$ goes to zero, of problem \eqref{prob} under the following assumptions on the data:
\vskip 1mm
(\textbf{A$1$}) \quad The coefficients matrix $A^\e$ is defined by
$\ds A^\e(x)=A\left(\frac{x}{\e}\right)$, where $A$ is $Y$-periodic and satisfies
    \begin{equation*}
   \ds (A(x)\lambda,\lambda)\geq \alpha |\lambda|^2,\qquad  |A(x)\lambda|\leq \beta|\lambda|, \qquad \forall \lambda \in \R^N \hbox{ and a.e. in } Y,
    \end{equation*}
   with $\alpha, \beta\in \R$, $0<\alpha< \beta$, being $Y=]0,1[^N$ the reference cell.
\vskip 1mm
(\textbf{A$2$})\quad 	$h^\e$ is defined by $h^\e(x')= h\left(\frac{x'}{\e}\right)$, where $h$ is a $Y'$-periodic function in $L^{\infty}(\R^{N-1})$ such that
$$\hbox{there exists } h_0 \in \R :  0 < h_0 < h(y') \hbox{ a.e.  on }Y'.$$

In order to introduce the weak formulation of problem \eqref{prob}, we first define the following closed and convex subset of $W_0^\e$:
\beq\label{keg}
\ds
K^\e_\g= \{v \in W_0^\e \ |\ [v]\geq 0 \hbox{ on } \gae\}.
\eeq

We suppose that a classical solution to problem \eqref{prob} exists. Then, we multiply the first equation in problem \eqref{prob} by $u_\e$ and integrate by parts obtaining
\beq
\ds \int_{Q\setminus \gae} A^\e\n u_\e\n u_\e\, dx -\int_{\gae}(A^\e\n u_\e)^+\cdot\nu_\e[u_\e]d\sigma=\int_{Q} f u_\e\, dx\, dx.
\eeq
Thus, adding and subtracting $\e^\g\displaystyle\int_{\gae}h^\e[u_\e]^2\,d\sigma$, we get
\beq
\ba{ll}
\ds  \int_{Q\setminus \gae} A^\e\n u_\e\n u_\e\, dx  -\int_{\gae}((A^\e\n u_\e)\cdot\nu_\e+\e^\g h^\e[u_\e])[u_\e]d\sigma+\e^\g\int_{\gae}h^\e[u_\e]^2\,d\sigma\\[5mm]
\ds =\int_{Q} f u_\e\, dx,
\ea
\eeq
which leads to 
\beq\label{equa}
\ds  \int_{Q\setminus \gae} A^\e\n u_\e\n u_\e\, dx +\e^\g\int_{\gae}h^\e[u_\e]^2\,d\sigma=\int_{Q} f u_\e\, dx,
\eeq
in view of the fourth condition in \eqref{prob}.\\
On the other hand, by multiplying the first equation in problem \eqref{prob} by a function $v\in K^\e_\g$, integrating by parts and taking into account the third condition in \eqref{prob}, one has
\beq\label{diseq}
\ba{lll}
\ds \int_{Q\setminus \gae} A^\e\n u_\e\n v\, dx +\e^\g\int_{\gae}h^\e[u_\e][v]\,d\sigma\\[5mm]
\ds =\int_Q f v\, dx+\int_{\gae}((A^\e\n u_\e)\cdot\nu_\e+\e^\g h^\e[u_\e])[v]\,d\sigma\geq \int_Q f v\, dx.
\ea
\eeq
Finally,  by subtracting \eqref{equa} from \eqref{diseq}, we obtain
\beq
\ds \int_{Q\setminus \gae} A^\e\n u_\e(\n v-\n u_\e) dx+\e^\g\int_{\gae}h^\e[u_\e]([v]-[u_\e])d\sigma \geq \int_{Q} f(v-u_\e)dx.
\eeq
Hence, we can give the following definition:
\begin{definition}\label{weak}
A function $u_\e\in K_\g^\e$ is a weak solution to problem \eqref{prob} if it satisfies  
\beq\label{pe}
\left\{\ba{llll}
\ds \text{Find } u_\e \in K^\e_\g \text{ such that }\\[2mm]
\ds \int_{Q\setminus \gae} A^\e\n u_\e(\n v-\n u_\e) dx+\e^\g\int_{\gae}h^\e[u_\e]([v]-[u_\e])d\sigma\\[5mm]
\ds \geq \int_{Q} f(v-u_\e)dx,\quad \forall v \in K^\e_\g.
\ea\right.
\eeq
\end{definition}

\subsection{Existence, uniqueness and uniform a priori estimates}\label{secexuniq}
We prove here the existence and the uniqueness of the solution to problem \eqref{pe},  for any fixed $\ep$, together with some uniform a priori estimates. 
\begin{theorem}\label{exun}
Under assumptions $(\textbf{A$_g$})$, $(\textbf{A}1)$, $(\textbf{A}2)$ and given $f\in L^2(Q)$, for any fixed $\ep$, there exists a unique weak solution $u_\e\in K^\e_\g$ to problem \eqref{prob} satisfying 
\begin{equation}\label{estsol}
\left\{
\begin{array}{ll}
\ds i)\ \|\nabla u_\ep\|_{L^2(Q\setminus \gae)}\leq C,\\[2mm]
\ds ii)\ \|u^+_\ep-u^-_\ep\|_{L^2(\gae)}\leq C\ep^{-\frac{\gamma}{2}},\\[2mm]
\end{array}
\right.
\end{equation}
with $C$ positive constant independent of $\ep$.
\end{theorem}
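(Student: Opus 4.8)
The natural approach is to recognize the variational inequality \eqref{pe} as the Euler--Lagrange condition for minimizing a coercive, convex functional over the closed convex set $K^\e_\g$, and then apply the classical Lions--Stampacchia theorem. First I would define the bilinear form
\[
a_\e(u,v)=\int_{Q\setminus \gae} A^\e\n u\,\n v\,dx+\e^\g\int_{\gae}h^\e[u][v]\,d\sigma
\]
and the linear functional $L(v)=\int_Q f v\,dx$ on $W_0^\e$. The plan is to verify the three hypotheses of Lions--Stampacchia: that $K^\e_\g$ is nonempty, closed, and convex; that $a_\e$ is continuous and coercive on $W_0^\e$; and that $L$ is continuous. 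Nonemptiness of $K^\e_\g$ is immediate since $0\in K^\e_\g$, and convexity/closedness follow because the constraint $[v]\geq 0$ is preserved under convex combinations and under $L^2$-limits (the trace operator on each component being continuous). Continuity of $L$ is a direct consequence of the Cauchy--Schwarz and Poincar\'e inequalities \eqref{Poi}.

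The coercivity and continuity of $a_\e$ are where the hypotheses on the data enter. Using the ellipticity in $(\textbf{A}1)$ together with the sign condition $0<h_0<h^\e$ from $(\textbf{A}2)$, both terms in $a_\e(v,v)$ are nonnegative, so
\[
a_\e(v,v)\geq \alpha\,\|\n v\|^2_{L^2(Q\setminus\gae)}=\alpha\,\|v\|^2_{W_0^\e},
\]
which gives coercivity in the norm \eqref{normwe0}. Continuity follows from the boundedness in $(\textbf{A}1)$ for the volume term and from $h^\e\in L^\infty$ together with the continuity of the jump trace $v\mapsto[v]$ from $W_0^\e$ into $L^2(\gae)$ for the surface term; note that for fixed $\e$ the factor $\e^\g$ is a harmless constant. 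Since $a_\e$ is symmetric, Lions--Stampacchia yields a unique minimizer, i.e.\ a unique $u_\e\in K^\e_\g$ solving \eqref{pe}.

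For the a priori estimates I would test \eqref{pe} with $v=0\in K^\e_\g$, which is admissible and produces, after discarding the nonnegative surface term, exactly the energy identity \eqref{equa} up to an inequality. More directly, choosing $v=0$ gives
\[
\int_{Q\setminus \gae} A^\e\n u_\e\,\n u_\e\,dx+\e^\g\int_{\gae}h^\e[u_\e]^2\,d\sigma\leq \int_Q f u_\e\,dx.
\]
Bounding the left side below using $\alpha$ and $h_0$, and the right side above via Cauchy--Schwarz, \eqref{Poi}, and Young's inequality to absorb $\|\n u_\e\|_{L^2(Q\setminus\gae)}$, yields estimate $i)$ with $C$ depending only on $\alpha$, $\|f\|_{L^2(Q)}$, and the Poincar\'e constant. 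The same inequality then gives $\e^\g h_0\,\|[u_\e]\|^2_{L^2(\gae)}\leq \int_Q f u_\e\,dx\leq C$, whence $\|u_\e^+-u_\e^-\|_{L^2(\gae)}\leq C\,\e^{-\g/2}$, which is estimate $ii)$. The main technical point to be careful about is ensuring the jump trace map into $L^2(\gae)$ is well defined with the constant $C$ in $i)$ \emph{independent} of $\e$; this independence is exactly what the uniform Poincar\'e inequality \eqref{Poi} provides, so no $\e$-dependent trace constant contaminates estimate $i)$, while the explicit $\e^{-\g/2}$ in $ii)$ arises solely from the $\e^\g$ weight on the interface term.
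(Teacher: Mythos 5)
Your proposal is correct and follows essentially the same route as the paper: both reduce \eqref{pe} to a classical existence--uniqueness theorem for variational inequalities over the closed convex set $K^\e_\g$, and both obtain the a priori estimates \eqref{estsol} by taking $v=0$ as test function, bounding $a_\e(u_\e,u_\e)$ from below via the ellipticity constant $\alpha$ and the lower bound $h_0$, and from above via the Cauchy--Schwarz and Poincar\'e inequalities \eqref{Poi}. The only substantive difference is the black box invoked: you use the Lions--Stampacchia theorem for a continuous coercive bilinear form on the Hilbert space $W_0^\e$, whereas the paper passes through hemicontinuity, equi-boundedness and strict monotonicity of the operator $\mathcal{A}_\e$ to conclude pseudo-monotonicity and then applies Lions' Theorems 8.2--8.3; the latter, more general machinery is what allows the extension to the nonlinear lower-order term mentioned in Remark \ref{nonlinear}, while your linear argument is more elementary and equally valid here. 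One small correction: assumption $(\textbf{A}1)$ does not require the matrix $A$ to be symmetric, so $a_\e$ need not be symmetric and the solution cannot in general be characterized as the minimizer of a quadratic functional; this is harmless, since the Lions--Stampacchia theorem gives existence and uniqueness for non-symmetric coercive continuous bilinear forms as well, but you should drop the minimization framing.
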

\begin{proof}
Denoted by $(W_0^\e)'$ the dual space of $W_0^\e$, we consider the operator
$$ \mathcal{A}_\e: W_0^\e \rw (W_0^\e)'$$
such that
\beq\label{Aep}
\ds \langle\mathcal{A}_\e(u),v\rangle_{(W_0^\e)',W_0^\e}= \int_{Q\setminus \gae} A^\e\n u\n v\, dx +\e^\g\int_{\gae}h^\e[u][v]\,d\sigma.
\eeq
Thus, problem \eqref{pe} reads as 
\beq\label{pe*}
\left\{\ba{lll}
\ds \text{Find } u_\e \in K^\e_\g \text{ such that }\\[2mm]
\ds \langle\mathcal{A}_\e(u_\e),v-u_\e\rangle_{(W_0^\e)',W_0^\e}\geq \int_Q f(v-u_\e)dx,\quad \forall v \in K^\e_\g.
\ea\right.
\eeq

Assumptions $(\textbf{A}1)$ and $(\textbf{A}2)$ give the hemicontinuity, the equi-boundedness  and the strict monotonicity of $\mathcal{A}_\e$. Hence, \cite[Proposition $2.5$ (pag. $179$)]{lions} implies that $\mathcal{A}_\e$ is pseudo-monotone.\\
Furthermore, it is easy to check that $\mathcal{A}_\e$ is equicoercive too. The existence and the uniqueness of the solution follow from \cite[Theorems $8.2-8.3$ (pag. $248$)]{lions}.

Finally, by choosing $v=0$ as test function in \eqref{pe*}, we get
$$\langle\mathcal{A}_\e(u_\e),-u_\e\rangle_{(W_0^\e)',W_0^\e}\geq \int_Q f(-u_\e)dx.$$
By $(\textbf{A}1)$, $(\textbf{A}2)$ and the Holder inequality, we have
\beq
\ds  \alpha \|\n u_\e\|^2_{L^2(Q\setminus \gae)} + \e^\g h_0\|[u_\e]\|^2_{L^2(\gae)}\leq \langle\mathcal{A}_\e(u_\e),u_\e\rangle_{(W_0^\e)',W_0^\e} \leq \|u_\e\|_{L^2(Q)}\|f\|_{L^2(Q)}.
\eeq
In view of \eqref{Poi}, the above inequality leads to
$$
\displaystyle
\alpha \|\n u_\e\|^2_{L^2(Q\setminus \gae)} + \e^\g h_0\|[u_\e]\|^2_{L^2(\gae)} \leq C \|u_\e\|_{W_0^\e},
$$
with $C$ positive constant independent of $\e$, which gives the claimed uniform a priori estimates.
\end{proof}

\section{The homogenization result}\label{sec4}
The following theorem, which is the main result of this paper, shows that the asymptotic behaviour of the weak solution to problem \eqref{prob} varies according to the amplitude of the oscillations $k$ and the parameter $\gamma$. To this aim, let us introduce the homogenized tensor $A^0$ (see also \cite{ben}) defined by
\begin{equation}\label{hommat}
\ds A^0\lambda=\mathcal{M}_{Y}(A(\n \omega_{\lambda})),
\end{equation}
with $\omega_{\lambda}\in H^1(Y)$  solution, for any $\lambda\in \mathbb{R}^N$, to
\begin{equation}\label{aux}
\left\{
\begin{array}{ll}
\ds -\hbox{div} \left( A\n \omega_{\lambda}\right)=0&\text{ in }Y,\\[2mm]
\ds \omega_{\lambda}-\lambda\cdot y & Y- \text{ periodic},\\[2mm]
	\mathcal{M}_Y(w_\lambda-\lambda \cdot y)=0.

\end{array}
\right.
\end{equation}
Moreover, \(A^0\) satisfies 
\begin{align}\label{alpha0}
\ds (A^0\lambda,\lambda)\geq \alpha |\lambda|^2\quad\mbox{and}\quad  |A^0\lambda|\leq \frac{\beta^2}{\alpha}|\lambda|, \quad\forall \lambda \in \R^N,
\end{align}
where $\alpha$ and  $\beta$ are given in (\textbf{A$1$}). 

In order to pass to the limit in the interface integral, let us state the following convergence result whose proof can be obtained arguing as  in \cite[Proposition $3.6$]{MPR1}:
\begin{proposition}\label{propg}
Under the assumptions (\textbf{A}$_g$) and (\textbf{A$_2$}), it holds
\begin{equation*}
\ds \ep^{\gamma}h^\e(x')\sqrt{1+\e^{2(k-1)}(\left|\n_{y'} g(y')\right|^2)_{|y'=x'/ \ep}}\, \to\, h_{\g,k}\quad \text{ weakly * in }L^\infty(Y'), 
\end{equation*}
with  $h_{\g,k}$ defined  by 	
 \begin{equation}\label{Hgk}
h_{\g,k}=\left\{
\begin{array}{ll}
m_{Y'}(h(1+|\n g|^2)^\frac{1}{2}) &\hbox{ if } k=1 \hbox{ and } \g=0,\\[2mm]
m_{Y'}(h|\n g|) &\hbox{ if } 0<k<1\hbox{ and } \g=1-k,\\[2mm]
 m_{Y'}(h) &\hbox{ if } k>1\hbox{ and } \g=0.
\end{array}	
\right.
\end{equation}
while for  (\,$k\geq 1  \hbox{ and }  \g>0$)  or (\,$0<k<1 \hbox{and }\g>1-k$),
\begin{equation}\label{0int}
\ds \ep^{\gamma}h^\e(x')\sqrt{1+\e^{2(k-1)}(\left|\n_{y'} g(y')\right|^2)_{|y'=x'/ \ep}}\, \to\, 0\quad \text{ weakly * in }L^\infty(Y').
\end{equation}
\end{proposition}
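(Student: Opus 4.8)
The plan is to reduce the statement to the classical weak $*$ convergence of rapidly oscillating periodic functions: for every $Y'$-periodic $\f\in L^\infty(\R^{N-1})$ one has $\f(\cdot/\e)\ru m_{Y'}(\f)$ weakly $*$ in $L^\infty(Y')$. Since $g$ is Lipschitz by (\textbf{A}$_g$), the quantity $L:=\|\,|\n_{y'} g|\,\|_{L^\infty(Y')}$ is finite; moreover $h\in L^\infty$ by (\textbf{A}$_2$). I introduce the $Y'$-periodic profile
$$
G_\e(y'):=h(y')\sqrt{1+\e^{2(k-1)}|\n_{y'} g(y')|^2},
$$
so that the object under study is exactly $\e^\g\,G_\e(x'/\e)$ (this $G_\e$ is, up to the factor $h$, the Jacobian of the graph parametrization of $\gae$, which is why it arises).

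The mechanism tying the uniform behaviour of the profile to the weak limit of its oscillation is the elementary splitting
$$
G_\e(x'/\e)-m_{Y'}(G_0)=\big[G_\e(x'/\e)-G_0(x'/\e)\big]+\big[G_0(x'/\e)-m_{Y'}(G_0)\big],
$$
valid for any candidate $Y'$-periodic limit profile $G_0$. Whenever $\|G_\e-G_0\|_{L^\infty(Y')}\to 0$, the first bracket tends to $0$ strongly in $L^\infty(Y')$, while the second tends to $0$ weakly $*$ by the recalled periodic convergence; hence $G_\e(\cdot/\e)\ru m_{Y'}(G_0)$. So everything rests on identifying $G_0$ and proving the uniform convergence $G_\e\to G_0$.

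I treat the two geometric regimes separately. For $k\geq 1$ and $\e\leq 1$ one has $\e^{2(k-1)}\leq 1$, so $G_\e$ is bounded by $\|h\|_\infty\sqrt{1+L^2}$; using $0\leq\sqrt{1+s}-1\leq s/2$ gives $\|G_\e-h\|_{L^\infty}\leq\tfrac12\|h\|_\infty L^2\,\e^{2(k-1)}$, which tends to $0$ when $k>1$, so there $G_0=h$, whereas for $k=1$ the profile is already $\e$-independent with $G_0=h(1+|\n g|^2)^{1/2}$. For $0<k<1$ the decisive algebraic move is to absorb the diverging factor into the root,
$$
\e^\g G_\e(y')=\e^{\,\g-(1-k)}\,h(y')\sqrt{\e^{2(1-k)}+|\n_{y'} g(y')|^2},
$$
and to invoke the pointwise inequality $0\leq\sqrt{a^2+b^2}-|b|\leq|a|$ with $a=\e^{1-k}$, $b=|\n g|$, which yields $\big\|\,h\sqrt{\e^{2(1-k)}+|\n g|^2}-h|\n g|\,\big\|_{L^\infty}\leq\|h\|_\infty\,\e^{1-k}\to 0$. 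I expect this to be the only genuinely delicate point: $|\n g|$ may vanish on a set of positive measure, so the naive asymptotics $\sqrt{1+\e^{2(k-1)}|\n g|^2}\sim\e^{-(1-k)}|\n g|$ breaks down precisely on $\{\n g=0\}$; keeping $\e^{1-k}$ inside the square root repairs the estimate uniformly and requires no nondegeneracy assumption on $g$.

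It remains to track the scalar prefactors. For the non-trivial limits in \eqref{Hgk}: when $k=1,\ \g=0$ the prefactor is $1$ and the limit is $m_{Y'}(h(1+|\n g|^2)^{1/2})$; when $k>1,\ \g=0$ it is $m_{Y'}(h)$; when $0<k<1,\ \g=1-k$ the exponent $\g-(1-k)$ vanishes, so the prefactor is $1$ and the limit is $m_{Y'}(h|\n g|)$, by the splitting applied to the rescaled profile. For the null limit \eqref{0int}: if $k\geq 1,\ \g>0$ then $\e^\g G_\e(x'/\e)$ is dominated in $L^\infty$ by $\e^\g\|h\|_\infty\sqrt{1+L^2}\to 0$, hence converges strongly, a fortiori weakly $*$, to $0$; if $0<k<1,\ \g>1-k$ the rewritten product is dominated by $\e^{\,\g-(1-k)}\|h\|_\infty\sqrt{1+L^2}\to 0$, giving the same conclusion. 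This covers all the stated cases.
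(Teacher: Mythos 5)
Your proof is correct. The paper itself gives no proof of this proposition --- it only points to \cite[Proposition 3.6]{MPR1} --- and your argument (uniform convergence of the $\e$-dependent periodic profile to a fixed $Y'$-periodic profile, combined with the classical weak-$*$ convergence of rescaled periodic $L^\infty$ functions to their mean) is precisely the standard mechanism such a proof uses; in particular, the rewriting $\e^{\g}\sqrt{1+\e^{2(k-1)}|\n g|^2}=\e^{\g-(1-k)}\sqrt{\e^{2(1-k)}+|\n g|^2}$ for $0<k<1$, which makes the estimate uniform on the set where $\n g$ vanishes, is exactly the right move.
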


\begin{theorem}\label{theo:homores}
Under assumptions $(\textbf{A}_g)$, $(\textbf{A}1)$, $(\textbf{A}2)$ and given  $f\in L^2(Q)$, let $u_\e$ be the weak solution to problem \eqref{prob}. Then, there exists a function $u \in W^0_0$ such that
\begin{equation}\label{convfin}
\left\{\begin{array}{lll}
\ds u_\e \rw u &\hbox{ strongly in }L^2(Q),\\[2mm]
		\ds \chi\strut_{Q^+_\e}\n u_{\e} \ru \chi\strut_{Q^+} \n u &\text{ weakly in }(L^2(Q))^N,\\[2mm]
		\ds \chi\strut_{Q^-_\e}\n u_{\e} \ru \chi\strut_{Q^-} \n u &\text{ weakly in }(L^2(Q))^N.
		\end{array}\right.
		\end{equation}

\vskip 0.3cm
\noindent \textbf{\emph{} Case A):} \,  $(k\geq 1$ and $\g=0)$\, {or}\, $(0<k<1$ and $\g=1-k)$
	
	\vskip 0.3cm
The limit function $u 	\in W^0_0$ is the unique weak solution to problem
\begin{equation}\label{prolimi1}
\left\{\begin{array}{ll}
\ds -\dive(A^0\n u)=f & \textnormal{ in }Q\setminus \Gamma_0,\\[2mm]
\ds (A^0\n u)^+\cdot \nu=(A^0\n u)^-\cdot \nu & \textnormal{ on }\Gamma_0,\\[2mm]
\ds[u]_0\geq 0, \quad (A^0\n u)^+\cdot\nu+h_{\g,k} [u]_0 \geq 0 & \textnormal { on } \G_0,\\[2mm]
\ds [u]_0((A^0\n u)^+\cdot \nu+h_{\g,k}[u]_0)=0 & \textnormal{ on }\Gamma_0,\\[2mm]
\ds u=0 & \textnormal{ on }\partial Q,
\end{array}
\right.
\end{equation}
where $\nu$ is the unit outward normal to $Q^+$ and $ h_{\g,k} $ is given in \eqref{Hgk}.
\vskip 5mm
	\noindent \textbf{\emph{} Case B):} \,  $(k\geq 1$ and $\g>0)$ \, {or} \, $(0<k<1$ and $\g>1-k)$
	\vskip0.3cm
The limit function $u	\in W^0_0$
 is the unique weak solution to problem
\begin{equation}\label{prolimi2}
\left\{\begin{array}{ll}
\ds -\dive(A^0\n u)=f & \textnormal{ in }Q\setminus \Gamma_0,\\[2mm]
\ds (A^0\n u)^-\cdot \nu=(A^0\n u)^+\cdot \nu & \textnormal{ on }\Gamma_0,\\[2mm]
\displaystyle
[u]_0\geq 0, \quad (A^0\n u)^+\cdot\nu \geq 0 & \textnormal{ on } \G_0,\\[2mm]
\displaystyle
[u]_0((A^0\n u)^+\cdot\nu)=0 & \textnormal{ on } \G_0,\\[2mm]
\ds u=0 & \textnormal{ on }\partial Q,
\end{array}
\right.
\end{equation} where $\nu$ is the unit outward normal to $Q^+$. 

\vskip 5mm

\noindent  \textbf{\emph{} Case C):} \, $({k\geq 1}$ and ${\g<0})$ \, {or} \,   $({0<k<1}$ and ${\g<1-k})$

\vskip 0.3cm
The limit function $u$ belongs to $H^1_0(Q)$ and it is the unique weak solution to problem

\begin{equation}\label{prolimi3}
\left\{\begin{array}{ll}
\ds -\dive(A^0\n u)=f & \textnormal{ in }Q,\\[2mm]
\ds u=0 & \textnormal{ on }\partial Q.
\end{array}
\right.
\end{equation}

\end{theorem}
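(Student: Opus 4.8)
The plan is to combine the compactness of Proposition \ref{propdonpiat1} with a term-by-term identification of the limit in the inequality \eqref{pe}, the three regimes being unified through Proposition \ref{propg}. By the bound \eqref{estsol} $i)$ the family $\{u_\e\}$ is bounded in $W_0^\e$, so Proposition \ref{propdonpiat1} furnishes (along a subsequence) a limit $u\in W_0^0$ together with the convergences \eqref{convfin}, which hold in all three cases. The first genuine step is to identify the limit flux. Since $\gae\subset\{0<x_N\le\e^k\|g\|_\infty\}$ collapses onto $\G_0$, every compact $K\subset\subset Q^\pm$ is eventually contained in $Q_\e^\pm$, where $u_\e\in H^1(K)$ with $\|\n u_\e\|_{L^2(K)}\le C$ and no interface is present; hence the classical energy method, using the oscillating test functions built from the cell problems \eqref{aux} together with the div--curl lemma, applies locally and yields $\chi_{Q_\e^\pm}A^\e\n u_\e\ru\chi_{Q^\pm}A^0\n u$ weakly in $(L^2(Q))^N$, with $A^0$ as in \eqref{hommat}. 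Finally, Proposition \ref{propdonpiat2} gives the strong trace convergence $[u_\e]\to[u]_0$ in $L^2(\omega)$.

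Next I determine the limit constraint. In Cases A) and B), passing to the limit in $[u_\e]\ge0$ through the strong trace convergence gives $[u]_0\ge0$, i.e. $u\in K^0_\g:=\{v\in W_0^0 : [v]_0\ge0 \text{ on }\G_0\}$. In Case C) I show instead $[u]_0=0$: writing $\|[u_\e]\|_{L^2(\gae)}^2$ as an integral over $\omega$ against the surface element $\sqrt{1+\e^{2(k-1)}|\n g(\cdot/\e)|^2}$ and invoking \eqref{estsol} $ii)$, the exponents in the defining regime of Case C) force the parametrised jump to vanish in $L^2(\omega)$; with Proposition \ref{propdonpiat2} this yields $[u]_0=0$, hence $u\in H^1_0(Q)$.

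The passage to the limit in the inequality rests on an algebraic simplification. Choosing $v=0$ and $v=2u_\e$ in \eqref{pe} (both admissible, since $[0]=0$ and $[2u_\e]=2[u_\e]\ge0$) recovers the energy identity \eqref{equa}; substituting it back into \eqref{pe} turns the inequality into the \emph{linear} statement
\[
\int_{Q\setminus\gae}A^\e\n u_\e\,\n v_\e\,dx+\e^\g\int_{\gae}h^\e[u_\e][v_\e]\,d\sigma\ \ge\ \int_Q f v_\e\,dx,\qquad\forall\,v_\e\in K^\e_\g.
\]
For a fixed $\phi\in K^0_\g$ I build a recovery sequence by extending $\phi^\pm\in C^1(\overline{Q^\pm})$ across $\G_0$ and restricting to $Q_\e^\pm$, so that $\n v_\e\to\n\phi$ strongly and $[v_\e]\to[\phi]_0$ uniformly. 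To guarantee admissibility $[v_\e]\ge0$ on $\gae$, I first treat test functions with strictly positive jump by replacing $\phi$ with $\phi+\tfrac1n\Phi$, where $\Phi\in W_0^0$ is fixed with $[\Phi]_0\ge c>0$; uniform trace convergence then makes $[v_\e]\ge0$ for $\e$ small, and a density/continuity argument in $n$ removes the perturbation. Pairing the weakly convergent flux with the strongly convergent $\n v_\e$, and combining the weak$^*$ limit of Proposition \ref{propg} with the strong convergence of the product $[u_\e][v_\e]$, the linear inequality passes to the limit and gives $a(u,\phi)\ge\int_Q f\phi$, where $a(u,\phi)=\int_{Q\setminus\G_0}A^0\n u\,\n\phi+\int_{\G_0}h_{\g,k}[u]_0[\phi]_0\,d\sigma$ in Case A), the interface term being absent in Case B) by \eqref{0int}.

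It remains to upgrade this to the variational inequality. Taking $\phi=u$ gives $a(u,u)\ge\int_Q fu$; conversely, the weak lower semicontinuity of the homogenised bulk energy, $\liminf_\e\int A^\e\n u_\e\n u_\e\ge\int_{Q\setminus\G_0}A^0\n u\,\n u$, together with the genuine convergence $\e^\g\int_{\gae}h^\e[u_\e]^2\to\int_{\G_0}h_{\g,k}[u]_0^2$ (again Proposition \ref{propg} against the strongly convergent squared jump) and the limit $\int A^\e\n u_\e\n u_\e+\e^\g\int h^\e[u_\e]^2\to\int fu$ coming from \eqref{equa}, forces $a(u,u)\le\int_Q fu$, whence $a(u,u)=\int_Q fu$. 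Subtracting this identity from $a(u,\phi)\ge\int_Q f\phi$ yields $a(u,\phi-u)\ge\int_Q f(\phi-u)$ for every $\phi\in K^0_\g$, which is the weak form of \eqref{prolimi1} (resp. \eqref{prolimi2}). In Case C) any $v\in H^1_0(Q)$ has $[v]=0$, so the interface integral drops and both $\pm v$ are admissible, giving directly $\int_Q A^0\n u\,\n v=\int_Q fv$, i.e. \eqref{prolimi3}. As each limit problem has a unique solution by the coercivity \eqref{alpha0} of $A^0$, the whole sequence converges. The main obstacle is precisely the construction of admissible recovery functions honouring the unilateral constraint $[v_\e]\ge0$ on the oscillating interface, together with the control of the two products of weakly convergent sequences --- flux against test gradient, and the oscillating coefficient of Proposition \ref{propg} against the squared jump --- for which the weak$\times$strong structure and the lower semicontinuity of the energy are indispensable.
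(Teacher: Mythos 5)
Your overall architecture coincides with the paper's: compactness from \eqref{estsol}$_i$ via Proposition \ref{propdonpiat1}, identification of the limit flux $\chi_{Q^\pm_\e}A^\e\n u_\e\ru\chi_{Q^\pm}A^0\n u$ (the paper cites \cite[Proposition 3.1]{donpiat} where you redo the local energy method), trace convergence via Proposition \ref{propdonpiat2}, recovery test functions for the convex set, and a separate, simpler argument in Case C using test functions with zero jump (your use of $\pm v$ is exactly the paper's choice $u_\e+\f$ with $\f\in H^1_0(Q)$). One genuinely nice point in your write-up is the explicit linearization: deriving the energy identity \eqref{equa} by testing with $v=0$ and $v=2u_\e$, reducing \eqref{pe} to $\mathcal{A}_\e(u_\e)v\ge\int_Q fv$ for all $v\in K^\e_\g$, and then recovering $a(u,u)=\int_Q fu$ from the liminf inequality for the bulk energy plus the convergence of the interface energy. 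This makes concrete the ``lower semicontinuity arguments'' that the paper only references (via \cite{MPR2}), and it correctly isolates the two weak--weak products.

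There is, however, one step that fails as stated: the auxiliary function $\Phi\in W_0^0$ with $[\Phi]_0\ge c>0$ on all of $\G_0$ does not exist. Since $\Phi^{\pm}\in H^1(Q^{\pm})$ vanish on the lateral boundary $\partial\omega\times\,]0,\ell[\,$ (resp. $\,]-\ell,0[\,$), their traces on $\partial Q^{\pm}$ lie in $H^{1/2}(\partial Q^{\pm})$ and are zero there; a trace that is identically zero on the lateral part and bounded below by $c/2$ in modulus on the adjacent face $\G_0$ up to $\partial\omega$ would have a Heaviside-type discontinuity across $\partial\omega\times\{0\}$, whose Gagliardo seminorm diverges. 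Hence $[\Phi]_0$ must degenerate near $\partial\omega$, and your ``uniform trace convergence makes $[v_\e]\ge0$ for $\e$ small'' is not secured in a neighbourhood of $\partial\omega\times\{0\}$, which is precisely where admissibility is delicate. The paper avoids this by a different recovery construction: it reflects $\f^{+}$ and $\f^{-}$ across $\G_0$ into $\psi_1,\psi_2\in H^1_0(Q)$ and sets $\f_\e=\chi_{Q^+_\e}\psi_1+\chi_{Q^-_\e}\psi_2$, so that $[\f_\e](x')=\f^+(x',\e^kg(x'/\e))-\f^-(x',-\e^kg(x'/\e))$ is nonnegative \emph{exactly}, for every $\e$, under the pointwise condition $\f^+(x',x_N)\ge\f^-(x',-x_N)$; no small perturbation of the constraint is needed. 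You should either adopt that construction, or localize your perturbation (take $[\Phi]_0\ge c$ only on compact subsets of $\omega$ and exhaust, which still requires an argument near $\partial\omega$). Apart from this, and from the density of smooth constrained test functions in $K^0$ (which the paper also leaves implicit), your proof is sound and follows essentially the same route.
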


\begin{proof}
Let $u_\e$ be the weak solution to problem \eqref{prob}. By Proposition \ref{propdonpiat1} and the a priori estimate \eqref{estsol}$_{i}$ in Theorem \ref{exun}, there exist a subsequence (still denoted $\ep$) and a function $u\in W^0_0$ such that
\begin{equation}\label{u}
		\left\{\begin{array}{lll}
		\ds u_{\e}\rw u &\text{ strongly in } L^2(Q) \text{ and a.e. in }Q,\\[2mm]
		\ds \chi\strut_{Q^+_\e}\n u_{\e} \ru \chi\strut_{Q^+} \n u &\text{ weakly in }(L^2(Q))^N,\\[2mm]
		\ds \chi\strut_{Q^-_\e}\n u_{\e} \ru \chi\strut_{Q^-} \n u &\text{ weakly in }(L^2(Q))^N.
		\end{array}\right.
		\end{equation}
 Consequently, one has 
\begin{equation*}
\left\{
\begin{array}{ll}
\ds \chi_{Q^+_\ep}u_\ep\rw \chi_{Q^+}u & \text{ strongly in }L^2(Q),\\[2mm]
\ds \chi_{Q^-_\ep}u_\ep\rw \chi_{Q^-}u & \text{ strongly in }L^2(Q),
\end{array}
\right.
\end{equation*}
in view of the almost everywhere convergence \eqref{convchi}.\\
Also, by arguing as in \cite[Proposition 3.1]{donpiat}, we get
\begin{equation}\label{limD}
\left\{
\begin{array}{lll}
\ds i)\ \chi_{Q^+_\ep} A^\ep \n u_\ep \ru \chi_{Q^+} A^0\n u & \text{ weakly in }(L^2(Q))^N,\\[2mm]
\ds ii)\ \chi_{Q^-_\ep} A^\ep \n u_\ep \ru \chi_{Q^-} A^0\n u & \text{ weakly in }(L^2(Q))^N.
\end{array}
\right.
\end{equation}

\begin{itemize}
\item[] {\bf The cases A) and B)}

Let us take $\varphi\in W^0_0$ such that $\f^+(x',x_N)\geq \f^-(x',-x_N)$ on $\gae$. Let $\psi_1$ and $\psi_2$ in $H^1_0(Q)$ be the extensions by reflection of $\f^+$ and $\f^-$, respectively. Then, $\varphi^+=\psi_{1|Q^+}$ and $\varphi^-=\psi_{2|Q^-}$. 
\\
Set $\varphi_\ep=\chi_{Q^+_\ep}\psi_{1}+ \chi_{Q^-_\ep}\psi_{2}$, one has that $\varphi_\ep \in K^\e_\g$, that is
\beq\label{saltopos}
\ds [\f_\e]\geq 0 \quad \hbox{ on } \gae.
\eeq
Indeed, it results 
\beq
\ds[\f_\e]=\f_\e^+-\f_\e^-=\psi_1-\psi_2=\f^+(x',x_N)-\f^-(x',-x_N)\geq 0 \quad \hbox{ on } \gae.
\eeq 
Therefore, by definition of $\gae$ and Proposition \ref{propdonpiat2}, we can pass to the limit in \eqref{saltopos} and get 
		\beq\label{salto}
		[\f]_0=\f^+-\f^-\geq 0\quad \hbox{ on }\G_0.
		\eeq
Hence, we can define the closed convex subset of $W_0^0$ as follows:
		\beq\label{ko}
\ds
K^0= \{v \in W_0^0 \ |\ [v]_0\geq 0 \hbox{ on } \G_0\}.
\eeq

Moreover, by construction and using \eqref{convchi}, it holds
	\begin{equation}\label{convtest}
		\ds \varphi_{\e}\rw \varphi \quad \text{ strongly in } L^{2}(Q).
		\end{equation}

Let us take  $\varphi_\ep$ as test function in the variational inequality \eqref{pe}, we want to pass to the limit in 
\beq\label{k=1lim1}
\ds  \int_{Q\setminus \gae} A^\e\n u_\e(\n \f_\e-\n u_\e) dx+\e^\g\int_{\gae}h^\e[u_\e]([\f_\e]-[u_\e])d\sigma \geq \int_{Q} f(\f_\e-u_\e)dx.
\eeq
Observe that first convergence in \eqref{u} and convergence \eqref{convtest} give
\begin{equation}\label{k=1lim3}
\ds \int_{Q} f (\f_\e-u_\e) \, dx \ \rw\ \int_{Q} f (\f-u) \, dx.
\end{equation}

Concerning the interface term, we recall that, in the coordinates $x'$, it can be written as follows: 
\begin{equation}\label{sur1}
\begin{array}{c}
\ds \ep^\g{\displaystyle \int_{\gae}h^\e(x')[u_\e]([\f_\e]-[u_\e])\, d\sigma}\\[5mm]
\displaystyle =\ep^{\g} \int_{\omega}h\left(\frac{x'}{\e}\right) \left(u^+_\e\left(x',\e^k g\left(\frac{x'}{\ep}\right)\right)- u^-_\e\left(x',\e^k g\left(\frac{x'}{\ep}\right)\right)\right)\times\\[5mm]
 \displaystyle
\left(\left(\f^+_\e\left(x',\e^k g\left(\frac{x'}{\ep}\right)\right)- \f^-_\e\left(x',\e^k g\left(\frac{x'}{\ep}\right)\right)\right)-\left(u^+_\e\left(x',\e^k g\left(\frac{x'}{\ep}\right)\right)- u^-_\e\left(x',\e^k g\left(\frac{x'}{\ep}\right)\right)\right)\right)\\[5mm]
\ds\times \sqrt{1+\ep^{2(k-1)}(\left|\n_{y'} g(y')\right|^2)_{|y'=x'/ \ep}}\, dx'.
\end{array}
\end{equation}

In view of \eqref{estsol}$_i$ and the construction of $\f_\e$, by Proposition \ref{propdonpiat2} one has,  up to a subsequence, 
\begin{equation}\label{convtrace}
\left\{\begin{array}{ll}
\ds u^{\pm}_\e\left(\cdot,\e^k g\left(\frac{\cdot}{\ep}\right)\right)\ \rw\ u^{\pm}(\cdot,0)\, \text{ strongly in } L^2(\omega),\\[2mm]
\ds \f^{\pm}_\e\left(\cdot,\e^k g\left(\frac{\cdot}{\ep}\right)\right)\ \rw\ \f^{\pm}(\cdot,0)\, \text{ strongly in } L^2(\omega).
\end{array}\right.
\end{equation}
Hence, we can pass to the limit in \eqref{sur1}.\\ 
In case A), taking into account Proposition \ref{propg}, we get
\beq\label{bintA}
\ds \ds \ep^\g{\displaystyle \int_{\gae}h^\e[u_\e]([\f_\e]-[u_\e])\, d\sigma} \rw h_{\g,k}\int_w [u(x',0)]_0([\f(x',0)]_0-[u(x',0)]_0)\, dx',
\eeq
where $ h_{\g,k} $ is  defined by \eqref{Hgk}.

Using classical results on the convergence of fluxes for arbitrary solutions and lower semicontinuity arguments (see for instance the proof of \cite[Theorem $2.2$]{MPR2}), by \eqref{salto}, \eqref{k=1lim1}, \eqref{k=1lim3} and \eqref{bintA}, we obtain 
\beq
\ds  \int_{Q\setminus \G_0} A^0\n u(\n \f-\n u) dx + h_{\g,k} \int_{\G_0} [u]_0([\f]_0-[u]_0)\, d\sigma\geq \int_{Q} f(\f-u)dx, \quad \forall \f \in K^0,
\eeq
where $K^0$ is given by \eqref{ko}.

Whereas, in case B), again by Proposition \ref{propg} we obtain
\beq\label{bintB}
\ds \ds \ep^\g{\displaystyle \int_{\gae}h^\e(x')[u_\e]([\f_\e]-[u_\e])\, d\sigma} \rw 0.
\eeq
Arguing as before, \eqref{salto}, \eqref{k=1lim3} and  \eqref{bintB} imply 
\beq
\ds  \int_{Q\setminus \G_0} A^0\n u(\n \f-\n u) dx\geq \int_{Q} f(\f-u)dx, \quad \forall \f \in K^0,
\eeq
where $K^0$ is given by \eqref{ko}.

Therefore, in case A) $u$ is a weak solution to problem \eqref{prolimi1} and in case B)  $u$ is a weak solution to problem \eqref{prolimi2}, where the definition of a weak solution is analogous to the one given in Definition \ref{weak}. By \eqref{alpha0}, the classical theory on variational inequalities implies that the limit problems \eqref{prolimi1} and \eqref{prolimi2} admit unique solutions.

\item[] {\bf The case C)}

In this case, one can prove that the limit function $u$ given in \eqref{u} is actually in $H^1_0(Q)$. This can be obtained by proving that $u^+_{|\gae}=u^-_{|\gae}$, as done in \cite{donpiat} (see also \cite[Theorem $2.4$]{MPR1}). Whence, we choose a function $\f \in H^1_0(Q)$ and consider  $(u_\e +\f)\in K^\e_\g$ as test function in \eqref{pe}, obtaining
\beq\label{star}
\ds\int_{Q\setminus \gae} A^\e\n u_\e\n \f dx \geq \int_Q f\f dx, \quad \forall \f \in H^1_0(Q).
\eeq

By \eqref{limD}, we have 
\begin{equation}\label{k=1lim1bis}
\begin{array}{c}
\displaystyle \int_{Q \setminus \gae} A^\e \n u_\e \n \f\, dx=\displaystyle \int_{Q} \chi_{Q^+_\e} A^\e \n u_\e \n\f\, dx+\int_{Q} \chi_{Q^-_\ep} A^\e \n u_\e \n\f\, dx \\[5mm]
\displaystyle\rightarrow \int_{Q} \chi_{Q^+} A^0 \n u\n\f\, dx +\int_{Q} \chi_{Q^-} A^0 \n u \n\f\, dx=\int_{Q} A^0\n u\n \f\,dx,
\end{array}
\end{equation}
which gives, by \eqref{star},
$$\int_{Q} A^0\n u\n \f\,dx\geq \int_Q f\f dx, \quad \forall \f \in H^1_0(Q).$$
Due to the arbitrariness of $\f$, the above inequality implies
\beq
\ds\int_{Q} A^0\n u\n \f\,dx=\int_Q f\f dx, \quad \forall \f \in H^1_0(Q),
\eeq
i.e. $u$ is the unique solution to the limit problem \eqref{prolimi3}.
\vskip 5mm
\end{itemize}
To conclude the proof, let us observe that the unique solvability of the limit problems entails that the convergences in \eqref{convfin} hold for the whole sequence. 
\end{proof}
\begin{remark}\label{hnullo}
In problem \eqref{prob} one can also assume $h^\e\equiv 0$. The existence and uniqueness of the solution can be obtained as in Theorem \ref{exun}, with obvious modifications. Clearly, one has only the uniform estimate \eqref{estsol}$_{i}$. Concerning the homogenization process, the arguments are analogous to the ones used in the previous theorem for cases A) and B) but now at level $\e$ the interface integral is missing. Hence, for all values of $k$ we obtain the only asymptotic behaviour given by problem \eqref{prolimi2}.
\end{remark}
\begin{remark}\label{nonlinear}
It is easy to generalize the results of the paper adding in the first equation of problem \eqref{prob} a lower order term of the type $h_1(u_\e)$, being $h_1:\R \rw\R$ an increasing and Lipschitz-continuous function, vanishing in zero. In view of the first convergence of \eqref{convfin}, in the limit problems it appears a corresponding lower order term of the type $h_1(u)$.
\end{remark}
\section*{Acknowledgement}
This work was supported by the project PRIN2022 D53D23005580006 ``Elliptic and parabolic problems, heat kernel estimates and spectral theory" and by the project GNAMPA2024 ``Problemi diretti e inversi per equazioni non lineari e di evoluzione".

\end{document}